\newcounter{excnt}
\newtheorem{thm}{Theorem}
\newtheorem{df}[thm]{Definition}
\newtheorem{rem}[thm]{Remark}
\newtheorem{lem}[thm]{Lemma}
\newenvironment{proof}%
     {\noindent {\bf Proof:}}{\nopagebreak\begin{flushright}{\bf q.e.d.}\end{flushright}}
     {\stepcounter{excnt}
      \noindent {\bf Example \arabic{excnt}} \phantom{i}}{\vspace{0.4cm}}
\title{A Modified Coefficient Ideal for Use with the Strict Transform }
\author{Anne Fr\"uhbis--Kr\"uger\\
        Institut f. Alg. Geometrie, Leibniz Universt\"at Hannover, Germany}
\begin{document}

\maketitle

\abstract{
Two main algorithmic approaches are known for making Hironaka's 
proof of resolution of singularities in characteristic zero constructive.
Their main differences are the use of different notions of transforms during
the resolution process and the different use of exceptional divisors in 
the descent in ambient dimension. In this article, we focus on the 
first difference. Only the approach using the weak transform has
up to now been successfully used in implementations, because the other
one requires an explicit stratification by the Hilbert-Samuel function
at each step of the algorithm which is highly impractical due to the high
complexity of the computation of such a stratification. In this article,
a (hybrid-type) algorithmic approach is proposed which allows the use 
of the strict transform without the full impact of the complexity of the 
stratification by the Hilbert-Samuel function at each step of the 
desingularization process. This new approach is not intended to always be
superior to the previously implemented one, instead it has its strengths
precisely at the weak point of the other one and is thus a candidate
to be joined with it by an appropriate heuristic.}

\section{Introduction}

Existence and construction of a desingularisation has been one of the 
central questions in algebraic geometry since the end of the 19th century.
In characteristic zero, it was proved in Hironaka's groundbraking work
\cite{Hir} in 1964, in which he also introduced standard bases w.r.t.
a local ordering among other tools, whereas the case of positive 
characteristic is still open. Nevertheless, the interest in resolution 
of singularities in characteristic zero did not end at that time. 
Instead, the main interest only shifted toward the quest for a better, 
more constructive understanding of Hironaka's non-constructive proof
in which the key is the choice of appropriate centers for the blowing
ups which provide the desingularization. This development led to two 
main algorithmic approaches to the proof: one due to Bierstone and Milman 
(see e.g. \cite{BM}) using a stratification by the Hilbert-Samuel-function 
at the beginning of each choice-of-center step and the strict transform
as the corresponding notion of transform. The other one, due to Villamayor
with many contributions and simplifications by others over the years (see 
e.g. \cite{BEV}, \cite{EH}, \cite{Kol}), does not need the Hilbert-Samuel 
function in the choice-of-center step, but pays for it by using the weak 
transform which (compared to the strict transform) picks up extra components
lying inside the exceptional divisor at each blowing up.\footnote{This 
difference is not the only one between the two approaches. They also 
differ in their descent in dimension of the ambient space. For Villamayor's
approach we can basically use any local coordinate system at a given point
and use the usual notion of derivatives. Bierstone and Milman, on the other
hand, pay special attention to a choice of the local coordinates, 
taking into account the exceptional divisors passing through the given 
point, and use logarithmic derivatives to ensure that they can factor
out larger powers of the exceptional divisors after the descent in 
dimension.}   This latter approach has been used in the two currently 
existing implementations, mainly because its invariant is more accessible 
to practical calculations (see e.g. \cite{BSch} and \cite{FKP}). 
These implementations, on the other hand, have brought certain questions 
and even conjectures, which had been treated purely theoretically up to 
that point, into the reach of computer experiments: among these e.g. 
explicit computation of the topological zeta-function in the quest for 
a counter example to the monodromy conjecture, treatment of singularities 
in singular learning machines and in hidden Markov models in algebraic 
statistics or systematic study of multiplier ideals for cases beyond 
plane curves (see e.g. \cite{W-Y}, \cite{Tuc}). 
These applications also showed that there are classes of examples, where 
the current implementations are rather far from choosing the optimal one 
among different possible sequences of centers. Additionally, the
orders/degrees of the higher order generators of the ideal in question
tend to grow much faster for the weak transform than for the strict transform.
As the order, Villamayor's main invariant, is blind to the order of
higher order generators, this has positive influence on the algorithmic proof,
but very negative one on the efficiency of the implemented algorithm.\\
Therefore it is a legitimate question to ask why the other approach has
not been implemented up to now. Here the stratification by the 
Hilbert-Samuel function or more precisely the task of finding the 
stratum of maximal Hilbert-Samuel function turned out to be the crucial 
issue. Computer experiments (see e.g. \cite{Raschke}) showed that even
in very careful implementations this step is far from being sufficiently
efficient to be used in every choice-of-center step, because it involves a
parametric standard basis calculation with as many parameters as there 
are variables in the basering. Additionally, the internal differences in the
descent in dimension, which were already mentioned in a footnote above, 
turn out to be without significant impact on the overall efficiency, 
because the fewer blowing ups needed by Bierstone and Milman have to be 
paid for by the algorithmically more involved use of a special coordinate 
system and logarithmic derivatives. Thus a direct implementation of the 
Bierstone-Milman approach does not promise any improvement in speed or size
as compared to the implemented algorithms. If, however, the calculation
of the Hilbert-Samuel stratum can be avoided in a significant number of 
steps, this approach could again be an option. In this article, we follow
this line of thought and provide an outline of an algorithm which does
not compute the Hilbert-Samuel stratum in each step, but instead computes
auxilliary ideals which provide the information, whether the Hilbert-Samuel 
function of the original ideal dropped, in terms of their order.
The geometric idea behind this is to modify the original ideal by adding
extra components which only emphasize the contribution by certain generators
of the ideal without having negative impact on the descending induction 
on dimension of the ambient space, which is the key to Hironakas proof
and all constructive variants of it. \\
In the section \ref{defcoef}, we give the definition of our modified 
coefficient ideal and then apply it to the problem of resolution of 
singularities in the following section. It is important to observe at
this point, that the modified coefficient ideal is just another way of 
stating that we pass through an intermediate auxilliary ideal before
entering the first descent in dimension. Section 5 and 6 are then of a
more practical nature, the first one discussing the computational issues
of this new approach and the last one illustrating the construction on
two simple explicit examples.\\
As the ideas leading to this article developed in several steps 
over time and underwent more than one metamorphosis before taking the
shape in which they are presented here, I am indepted to many people 
for fruitful discussions, among them Bernard Teissier, Vincent Cossart,
Herwig Hauser, Edward Bierstone, Roushdi Bahloul, Gerhard Pfister, 
Rocio Blanco and Patrick Popescu-Pampu. I'd also like to thank the 
Institut de Math\'ematiques de Jussieu for their hospitality, at which 
I was a visitor when my first ideas in this direction evolved.

\section{Basic Definitions and Notations}

In order to fix notation, we would like to first state the basic defintions
and some selected properties of the invariant controlling the choice of 
centers. A section of just 1 or 2 pages can obviously not suffice to even 
give a brief introduction into the intricacies of resolution of singularities,
instead we would like to point to more thorough discussions in section 
4.2 of \cite{FK1} %Triest
from the practical point of view and in \cite{EH} embedded in a detailed
treatement of the resolution process.\\
In the general setting for these definitions, $W$ is a smooth equidimensional 
scheme over an algebraically closed field $K$ of characteristic zero and 
$X \subset W$ a subscheme thereof. For the purpose of quickly fixing 
notation in this section, we immediately focus on one affine chart $U$
with coordinate ring $R$ and denote the maximal ideal at $x \in U$ by
${\mathfrak m}_x$; $x_1,\dots,x_d$ are a local system of parameters of 
$R$ at $x$.\\
The order of an ideal $I = \langle g_1,\dots,g_r \rangle \subset R$ at a 
point $x \in U$ is defined as
           $$ord_x(I):={\rm max}\{m \in {\mathbb N} | \;\;
                   I \subset {\mathfrak m}_x^m \};$$
the locus of order $\geq 2$ of $I$ can be computed as the vanishing locus of
           $$\Delta(I)=\langle \{g_i| 1 \leq i \leq s\} \cup
                               \{\frac{\partial g_i}{\partial x_j} |
                               1 \leq i \leq r, 1 \leq j \leq d \} \rangle,$$
the locus of order $\geq c$ as the one of $\Delta^{c-1}(I)$.\\
The Hilbert-Samuel function of $R/I$ at $x$ is defined as
\begin{eqnarray*} 
HS_x: {\mathbb N} & \longrightarrow & {\mathbb N}\cr
                s & \longmapsto     & length(R/{\mathfrak m}_x^{s+1}),
\end{eqnarray*}
comparison is performed lexicographically. Locally at a point it can be
computed by determining a standard basis at this point w.r.t. a local
degree ordering and subsequent combinatorics on its leading ideal. 
Determining its locus of maximal value, however, is an expensive parametric 
Groebner basis calculation involving as many parameters as there are
variables.\\
Having fixed notation for these invariants, we can now state the general
structure of the invariant which controls the choice of center in the 
resolution process:
$$(ord\;\; {\rm or}\;\; HS, n; ord, n; ord, n;\dots)$$
where $n$ denotes a count of certain exceptional divisors which is not
going to have any impact on the considerations in this article. The
semicolon in the invariant denotes the key step in the invariant,
Hironaka's descent in dimension of the ambient space by means of 
a hypersurface of maximal contact. Hypersurfaces of maximal contact 
are defined by order 1 elements of $\Delta^{max_x ord_x(I)-1}(I)$, which
satisfy certain normal crossing conditions; choosing a hypersurface of
maximal contact can be interpreted as locally choosing a main variable. 
The order directly after the semicolon then denotes the order of an 
auxilliary ideal, the coefficient ideal\footnote{More precisely, the
order of the non-monomial part of the coefficient ideal is taken after  
decomposing into a product of ideals consisting of a monomial part, i.e. 
a product of powers of the exceptional divisors, and a non-monomial part.}, 
arising in the descent in dimension by suitably collecting the coefficients 
of the powers of the main variable. More precisely, let $Z=V(z)$ be a 
hypersurface of maximal contact for $I$ at $x$, then the coefficient ideal 
of $I$ w.r.t. $Z$ can be computed as
         $$Coeff_Z(I)=\sum_{k=0}^{ord_x(I)-1} I_k^{\frac{k!}{k-i}}$$
where $I_k$ is the ideal generated by all polynomials which appear as
coefficients of $z^k$ in some element of $I$.\\
Having stated the basic notions necessary to fix notation for discussing
algorithmic choice of center, we also need to briefly consider notions of
transforms under blowing ups: Let $\pi: \tilde{U} \longrightarrow U$ 
be a blowing up at a center $C$ with exceptional divisor $E$.
Then the total transform of $X \subset U$ (defined by the ideal $I_X$) 
under the blowing up $\pi$ is given by $I_X {\mathcal O}_{\tilde{U}}$. 
The strict and weak transform can then be computed as
$$I_{X,strict}=(I_X {\mathcal O}_{\tilde{U}} : I_E^{\infty}) \;\;\;
  {\rm and} \;\;\;
  I_{X,weak}=(I_X {\mathcal O}_{\tilde{U}} : I_E^b)$$
where $b$ is the largest integer such that 
$I_E^b \cdot (I_X {\mathcal O}_{\tilde{U}} : I_E^b)=
I_X {\mathcal O}_{\tilde{U}}$.

\section{Defining a Modified Coefficient Ideal}  \label{defcoef}

In this section, we define our modified coefficient ideal at a point
$w \in W$ in a very special situation first and subsequently study whether
we can always pass from the general case to this particular setting.
In the special situation, order reduction for the appropriately marked 
modified coefficient ideal is equivalent to a drop in the 
Hilbert-Samuel function of the original ideal under strict transform. \\

\noindent
To this end, let $y_1,\dots,y_n$ be a local system of parameters for 
${\mathcal O}_{W,w}$ and let 
${\mathcal I}_{X,w}=\langle f_1,\dots,f_k \rangle$ be subject to 
the following conditions:
\begin{enumerate}
\item[(1)] $f_1,\dots,f_k$ is a reduced standard basis of ${\mathcal I}_{X,w}$
           with respect to a local degree ordering such that
           $y_1 > y_2 > \dots > y_n$. We assume these to be numbered
           such that $d_1 \leq d_2 \leq \dots \leq d_k$ where 
           $d_i=ord_w(f_i)$.
\item[(2)] There are integers $1 \leq e_1 \leq \dots \leq e_k \leq n$ 
           such that for each $1 \leq i \leq k$ all $V(y_l)$, 
           $1 \leq l \leq e_i$, are hypersurfaces of maximal contact 
           for the ideal 
           $$J_i=\langle \underline{y}^{\alpha^{(1)}}f_1,\dots,
                         \underline{y}^{\alpha^{(i-1)}}f_{i-1},f_i,\dots,f_k
                 \mid \alpha^{(r)} \in I_{r,i} \forall 1 \leq r \leq i-1
                 \rangle $$
           where the numbers $e_i$ are maximal with this property and
           $I_{r,i}$ is the set of all multi-indices for which
           $\sum_{j=1}^{e_s} \alpha^{(r)}_j \geq max \{0,d_s - d_r\}$ for all 
           $1 \leq  s < i$ and $|\alpha^{(r)}|=d_i - d_r$.
\end{enumerate}
To be able to reference coefficients of each $f_s \in J_i$ w.r.t. monomials 
in the variables $y_1,\dots,y_{e_i}$ separately, we 
write $f_s=\sum_{\beta} a_{\beta}^{(s,i)} \underline{y_{(i)}}^{\beta}$.

\begin{df}
The modified coefficient ideal of ${\mathcal I}_{X,w}$ is then defined
as the usual coefficient ideal of the ideal $J_k$ with respect to
$Z=V(y_1,\dots,y_{e_k})$. More explicitly,
$${\it Coeff}^{new}_Z ({\mathcal I}_{X,w})=
    \sum_{j=0}^{d_k-1} I_j^{\frac{d_k!}{d_k-j}}$$
where $I_j=\langle a_{\beta}^{(i,k)} \mid 
                   \sum_{l=1}^{e_k} \beta_l \leq j - d_k + d_i \rangle$.
\end{df}

As will be discussed in detail in section \ref{resolve}, this modified
coefficient ideal is in no way intended to be used in all
descents in dimension of the ambient space in the computation of
the value of the invariant. Instead, it only replaces the usual coefficient 
ideal in the descent of highest ambient dimension, accommodating for the use 
of the strict transform. In all further descents, we use again the 
usual coefficient ideal, which for the time being is the one in Villamayor's
approach, although the approach could easily be changed to use the one of 
Bierstone and Milman. \\

\begin{rem}
The use of a standard basis in the above definition is not necessary for
the coefficient ideal itself: as all elements of a standard basis can be 
expressed as linear combinations of the original generators over the base 
ring, all contributions to the coefficient ideal already originate from 
these. The fact that we do not require identiy, but only $\leq$
in the condition concerning the $I_j$ again emphasizes this point of 
view. In the subsequent section, however, the standard basis property
will be used when considering the effects of blowing up on the coefficient
ideal, because this allows a significantly simpler treatment of the strict
transform of ${\mathcal I}_{X,w}$. 
\end{rem}

The crucial issue about the above definition is its possible dependence 
on a particular choice of the local system of parameters. This, however,
does not prevent its use in a resolution invariant as the following
lemma observes:

\begin{lem}
Different choices of the $y_1,\dots,y_n$ (subject to conditions (1) and (2))
affect neither the order of the modified coefficient ideal nor the 
orders of the subsequent (usual) coefficient ideals.
\end{lem}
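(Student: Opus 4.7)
The plan is to isolate the coordinate-dependent ingredients of the construction and reduce the claim to the classical invariance theorem stating that the order of the usual coefficient ideal with respect to a hypersurface of maximal contact does not depend on the particular choice of such a hypersurface (Hironaka, see e.g.\ \cite{EH}, \cite{BEV}). This classical result handles the ``subsequent coefficient ideals'' portion of the statement once the first descent is under control, so the real content of the lemma is the invariance of the order of the modified coefficient ideal itself.

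First I would observe that the integers $d_i = ord_w(f_i)$ are intrinsic to $\mathcal{I}_{X,w}$ together with the chosen reduced standard basis, hence independent of any particular local parameters. The multi-index sets $I_{r,i}$ are defined purely in terms of the $d_s$ and the numbers $e_s$ for $s<i$, so it suffices to show that the $e_i$ and the resulting ideals $J_i$ do not essentially change between two admissible choices of parameters.

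The next step is an induction on $i$ showing that the $e_i$ are intrinsic. By condition (2), $e_i$ is the maximal length of a flag $V(y_1),\dots,V(y_{e_i})$ of hypersurfaces of maximal contact for $J_i$. Since being a hypersurface of maximal contact is an intrinsic condition attached to $\Delta^{c-1}(J_i)$ together with normal crossing requirements, two admissible parameter systems must yield the same $e_i$. Together with the inductive hypothesis, this shows that $J_k$ is well defined up to an admissible change of coordinates within the flag $V(y_1),\dots,V(y_{e_k})$, and consequently the modified coefficient ideal is simply the usual coefficient ideal of a well-defined augmented ideal. At that point the classical invariance applies, and by induction on the ambient dimension the orders of the subsequent coefficient ideals also stabilise.

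The main obstacle I expect is the modification step itself, because it breaks the symmetry that makes the classical invariance proof transparent: we distinguish generators by their order $d_r$ and pad the lower-order ones with monomials in $y_1,\dots,y_{e_s}$. Under an admissible coordinate change individual standard basis elements need not be preserved, so verifying that the ranges of indices in the $I_j$ (and ultimately $J_k$) match across choices is delicate. The cleanest route is probably to show that the leading ideal of $J_k$ with respect to the local degree ordering is coordinate-invariant up to the monomial content relevant for $Z$, so that the augmentation contributes in the same way regardless of the particular admissible parameters chosen.
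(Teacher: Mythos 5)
Your reduction has a circularity at exactly the point you yourself flag as ``the main obstacle'', and the proposal never closes it. You argue that the $e_i$ are intrinsic because ``being a hypersurface of maximal contact is an intrinsic condition attached to $\Delta^{c-1}(J_i)$''; but $J_i$ is \emph{not} intrinsic -- it is built from the previously chosen parameters through the padding monomials $\underline{y}^{\alpha^{(r)}}$, so two admissible parameter systems a priori produce two different ideals $J_i$ and $J_i'$, and intrinsicness of maximal contact \emph{for a fixed ideal} gives you nothing about the comparison between them. The same objection applies to your opening claim that the $d_i$ are intrinsic: they are orders of elements of a reduced standard basis with respect to a local degree ordering, and that standard basis (and its leading ideal) depends on the coordinates, so this needs an argument rather than an assertion. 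Your closing suggestion -- show that the leading ideal of $J_k$ is coordinate-invariant up to the monomial content relevant for $Z$ -- is precisely the unproved content of the lemma restated, not a proof strategy; as written, the proposal only settles the easy half (invariance of the subsequent coefficient ideals once $J_k$ is fixed, which the paper also dispatches by quoting the Villamayor-style invariance).

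The paper avoids the intrinsicness question altogether and instead compares the two constructions by explicit coordinate changes. First it proves a propagation statement: $\Delta^{d_i-1}(J_i)\subseteq\Delta^{d_{i+1}-1}(J_{i+1})$, so the hypersurfaces chosen at stage $i$ remain hypersurfaces of maximal contact for all later $J_{i+1},\dots,J_k$ (this is what makes the two flags comparable at the top level, and it has no counterpart in your sketch). Then, when the two parameter systems differ by elements of ${\mathfrak m}_{W,w}^2$, the substitution $x_i\mapsto x_i+g_i$ carries $J_k$ into $J_k'$ and one flag into the other, so all subsequent orders agree; finally the general case is reduced to a linear change of coordinates, under which coefficients of monomials of a fixed degree are replaced by full-rank constant linear combinations of coefficients in the same degree, leaving every order unchanged. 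If you want to salvage your route, you would have to prove the equality of the $d_i$, $e_i$ and of the relevant orders attached to $J_k$ and $J_k'$ directly, and the natural way to do that is exactly the coordinate-change comparison the paper carries out -- so the missing step is not a technicality but the core of the lemma.
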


This, obviously, implies that the order of the non-monomial part is also
unaffected by those different choices, as the order of the monomial part
is not sensitive to it.
 
\begin{proof}
First we observe that for $J_k$ itself different choices of the
hypersurfaces of maximal contact do not affect the orders of the
subsequent coefficient ideals, since we are using precisely the
usual Villamayor-style resolution invariant. What remains to be
proved is that the values of the invariant coincide for any two 
ideals $J_k$ and $J_k'$ arising from the original ideal 
${\mathcal I}_{X,w}$ as described above w.r.t. two different
local systems of parameters\footnote{These two systems of parameters
are of course both subject to the conditions (1) and (2).}
 $x_1,\dots,x_n$ and $y_1,\dots,y_n$. 
To this end, we first consider hypersurfaces of maximal contact for
$J_i$ which give rise to hypersurfaces of maximal contact for $J_{i+1}$ 
and then proceed by comparing the (usual) coefficient ideal of 
$J_k$ and $J_k'$ with respect to suitably chosen flags. \\[0.3cm]
\noindent \underline{Step 1:} descent in ambient dimension for $J_k$\\
Let $x_1,\dots,x_n$ be the regular system of parameters chosen in the 
construction of $J_k$ and let $V(y)$ be a hypersurface of maximal 
contact for $J_i$ for an arbitrary $1 \leq i < k$ (satisfying
$y \in \Delta^{d_i-1}(J_i)$ and $y \not\equiv 0 \mod {\mathfrak m_{W,w}}^2$). 
Recalling that $\Delta^{d_i-1}(J_i)$ is generated by the generators 
of $J_i$ and all their partial derivatives up to the $(d_1-1)$st ones,
we now consider the analogously constructed set of generators of 
$\Delta^{d_{i+1}-1}(J_{i+1})$. For those elements originating from standard
basis elements $f_s$ with $s>e_i$, we find again all generators we already had 
plus additionally higher derivatives thereof; for the elements originating
from an $f_s$, $s \leq e_i$, we now check the corresponding property
by explicit calculation:\\
Let $g=\frac{\partial^{\gamma} f_s}{\partial y^{\gamma}}$ be an 
arbitrary element of $\Delta^{d_i-1}(J_i)$ and let 
$\tilde{\gamma}=(\gamma_1+d_{i+1}-d_i,\gamma_2,\dots,\gamma_n)$.
Then 
$$\frac{\partial^{\tilde{\gamma}} y_1^{d_{i+1}-d_i} f_s}
       {\partial y^{\tilde{\gamma}}} =
\sum_{l=0}^{d_{i+1}-d_i} \left(\begin{array}{c} d_{i+1}-d_i \cr l
                        \end{array} \right) 
           \frac{\partial^l y_1^{d_{i+1}-d_i}}{\partial y_1^l} \cdot
           \frac{\partial^{(\tilde{\gamma}_1-l,\gamma_2,\dots,\gamma_n)} f_s}
                {\partial y^{(\tilde{\gamma}_1-l,\gamma_2,\dots,\gamma_n)}},$$
where the summand for $l=d_{i+1}-d_i$ is precisely the desired $g$ up
to a non-zero constant factor. But $y_1$ itself is an element of 
$\Delta^{d_i-1}(J_i)$ which implies that it can be written as a 
finite linear combination 
$$y_1 = \sum_{t} a_t \frac{\partial^{\eta_t} f_{i_t}}{\partial y^{\eta_t}},$$
where the $a_t$ are constants, $|\eta_t| \leq d_1-1$. As the appearing
derivatives of the $f_{i_t}$ for $i_t > e_i$ are in 
$\Delta^{d_{i+1}-1}(J_{i+1})$ by construction, we may assume without loss 
of generality that $i_t \leq e_i$ for all $t$.
Replacing the $f_{i_t}$ by 
$\frac{1}{(d_{i+1}-d_i)!} y_1^{d_{i+1}-d_i} \cdot f_{i_t}$ and the $\eta_t$
by $\tilde{\eta_t}$, the corresponding linear combination yields:
\begin{eqnarray*}
\sum_{t} a_t \frac{1}{(d_{i+1}-d_i)!}
             \frac{\partial^{\tilde{\eta_t}} y^{d_{i+1}-d_i} f_{i_t}}
                {\partial y^{\tilde{\eta_t}}} & = &
     \underbrace{\sum_{t} 
          a_t \frac{\partial^{\eta_t} f_{i_t}}{\partial y^{\eta_t}}}_{=y_1} 
     + y_1 \cdot c_2\\
 & = & (1+c_2) \cdot y_1,
\end{eqnarray*}
where $c_2$ is a positive constant.
Hence $y_1 \in \Delta^{d_{i+1}-1}(J_{i+1})$ which in turn implies by the
above considerations that indeed $g \in \Delta^{d_{i+1}-1}(J_{i+1})$ as
was to be proved.\\
This shows that indeed $\Delta^{d_i-1}(J_i)$ is contained in 
$\Delta^{d_{i+1}-1}(J_{i+1})$ which implies that 
$y_s \in \Delta^{d_{i+1}-1}(J_{i+1})$, $1 \leq s \leq e_i$ and hence 
proves the claim that $V(y_s)$ is a hypersurface of maximal contact for 
$J_{i+1}$. \\[0.3cm]
\noindent \underline{Step 2:} $y_i-x_i \in {\mathfrak m}_{W,w}^2$\\
Coming back to our original problem of comparing the coefficient ideals 
of $J_k$ and $J_k'$, we split our considerations into two parts. In this
first one, we assume that $g_l:=y_l-x_l \in {\mathfrak m}_{W,w}^2$, 
and use that $x_1,\dots,x_{e_k}$ give rise to 
hypersurfaces of maximal contact for $J_k'$ and $x_1-g_1,\dots,
x_{e_k}-g_{e_k}$ do so for $J_k$. A coordinate change replacing
$x_i$ by $x_i+g_i$ for all $1 \leq i \leq e_k$ transforms $J_k$ into 
$J_k'$ and $x_i-g_i$ into $x_i$, thus passing from one case to the
other. Therefore the appearing orders of all subsequent coefficient ideals
coincide.\\[0.3cm]
\noindent \underline{Step 3:} general case\\
By applying the construction of step 2 as a preparation step for one of 
the two systems of parameters and as a postprocessing step for the other
in the general case, we may restrict our considerations to a linear 
change of coordinates such that both systems of parameters satisfy 
the conditions (1) and (2). 
In this case, we do not even need to
worry about the orders, as the coefficients of the monomials of a given
degree are only transformed into linear combinations thereof still in the
same degree (according to appropriate matrices with constant entries having 
full rank). Hence the corresponding orders are unchanged.
\end{proof}

After studying the effects of different choices of the system of parameters
on the resulting resolution invariant, we now discuss how to pass from
the general situation to the special situation by a variant of the standard
basis algorithm: \\

Let ${\mathcal I}_{X,w}$ be as above and let $d_1:=ord_w({\mathcal I}_{X,w})$
be its order. Then we know that ${\mathcal I}_{X,w}$ contains 
at least one generator of order $d_1$ at $w$. We further know that
$e_1 := dim_K (\Delta^{d_1-1}({\mathcal I}_{X,w})/{\mathfrak m}_{W,w}^2) > 0$.
Thus we can choose $y_1,\dots,y_{e_1} \in {\mathfrak m}_{W,w}$ giving 
rise to a basis of this finite dimensional vector space and extend this
to some local system of parameters. Expressing the generators of 
${\mathcal I}_{X,w}$ w.r.t. the new system of parameters, we enter the 
standard basis calculation choosing a local degree lexicographical 
ordering on the set of monomials in the chosen system of parameters.
In the standard basis algorithm, we only treat s-polynomials of pairs 
with original leading monomials in degree $d_1$, postponing all calculations
in higher degrees. After appropriate renumbering and interreduction of 
the generators, we may assume that $f_1,\dots,f_i$ are precisely the 
standard basis elements of order $d_1= \dots =d_i$ and satisfy
$LM(f_1) > LM(f_2) > \dots > LM(f_i)$, where each of these leading monomials
is some product of powers of $y_1,\dots,y_{e_1}$.\\ 
We now set $e_j:=e_1$ for all $1 \leq j \leq i$. From now on, we proceed
degree by degree through the standard basis algorithm, adding new generators
$f_j$ to our evolving standard basis as necessary, defining the 
corresponding orders $d_j$ accordingly and setting $e_j:=e_1$, until a
leading monomial appears which is not a product of the $y_1,\dots,y_{e_1}$.
We do not yet add this polynomial $f_{s+1}$ to the evolving standard basis, 
because we first need to take care of adjusting our local system of parameters
in the current degree $d_l$.
To this end, we form a second ideal $J$ by dropping the elements of our
evolving standard basis from the set of generators of the ideal and
subsequently adding all products $\underline{y}^{\alpha} f_j$, where 
$f_j$ is one of the previously dropped elements of the evolving standard 
basis and $|\alpha|=d_l-d_j$. By construction, we now have
$e_{s+1}:=dim_K (\Delta^{d_1+|\alpha|-1}(J)/{\mathfrak m}_{W,w}^2) > e_s=e_1$.
As before, we choose $y_{e_1+1},\dots,y_{e_r}$ such that $y_1,\dots,y_{e_r}$
give rise to a basis of this vector space, and extend it to a local system
of parameters.  From here on, we proceed as before degree by degree
through the standard basis calculation extending the standard basis,
defining new $d_l$ and $e_l$ as needed and passing to a new local system 
of parameters according to the above construction when appropriate.  

\begin{rem}
In an affine setting, a reduced standard basis can be computed in family 
along each given connected component of a given stratum w.r.t. the 
Hilbert-Samuel function. This fact will be very useful for the computational
point of view, which we will consider later on in section \ref{compute}.
\end{rem}

\section{The Modified Coefficient Ideal and the Resolution Process} 
         \label{resolve}

From the construction introduced in the previous section, it is obvious
that a center determined in this way will always be contained in the
locus of maximal value of the Hilbert-Samuel function of ${\mathcal I}_{X}$. 
To study the use of the modified coefficient ideal in the resolution process,
we would like to compare the effects of a blowing up in such a center to
our original ${\mathcal I}_X$ and to the auxilliary ideals $J_k$.\\

\begin{lem}
The maximal value of the Hilbert-Samuel function of the strict transform
of $I_{X,w}$ is smaller than the original one at $w$ if and only if the 
maximal order of the weak transform of $J_k$ is strictly less than 
$d_k=ord_w(J_k)$.
\end{lem}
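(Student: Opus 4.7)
The plan is to exploit the dual role of $J_k$: geometrically it replaces $\mathcal{I}_{X,w}$ by an ideal whose generators all have the same order $d_k$ at $w$, while algebraically the decoration of each $f_r$ with the monomial factor $\underline{y}^{\alpha^{(r)}}$ retains full information about the $\nu^*$-sequence $(d_1,\dots,d_k)$ of orders of the standard basis. By Hironaka's criterion for preservation of the Hilbert-Samuel function under a permissible blow-up (applicable thanks to the standard-basis nature of $(f_1,\dots,f_k)$ from condition (1)), the Hilbert-Samuel function at $w$ is preserved at a point $w'$ of the fiber if and only if every strict transform $\tilde{f}_r$ still has order exactly $d_r$ at $w'$; so it suffices to translate this pointwise condition into a pointwise condition on the weak transform of $J_k$.

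The first concrete step is to compute the weak transform of $J_k$ in a chart. Under the substitution $y_i = y_1 y_i'$ on the blown-up coordinates, each admissible generator satisfies
\[
\underline{y}^{\alpha^{(r)}} f_r \;\longmapsto\; y_1^{d_k}\, M^{(r)}\,\tilde{f}_r,
\]
where $M^{(r)}$ is a monomial in the chart coordinates whose exponents are inherited from $\alpha^{(r)}$, and $\tilde{f}_r$ denotes the strict transform of $f_r$; the generator $f_k$ itself contributes $\tilde{f}_k$. Dividing out the common factor $y_1^{d_k}$ identifies the generators of $J_k^{weak}$, so the order at a point $w'$ of the exceptional divisor is
\[
ord_{w'}\!\bigl(M^{(r)}\tilde{f}_r\bigr) \;=\; ord_{w'}(M^{(r)}) + ord_{w'}(\tilde{f}_r).
\]

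For the direction ``HS drops $\Rightarrow$ max order $<d_k$'', I argue at each $w'$ separately. If the Hilbert-Samuel function has dropped at $w'$, some $\tilde{f}_r$ satisfies $ord_{w'}(\tilde{f}_r) \leq d_r - 1$. For this $r$, pick an admissible $\alpha^{(r)}$ with weight placed on a single chart coordinate vanishing at $w'$; combining $ord_{w'}(M^{(r)}) \leq d_k - d_r$ with the drop for $\tilde{f}_r$ produces a generator of $J_k^{weak}$ of order at most $(d_k - d_r) + (d_r - 1) = d_k - 1 < d_k$, so $ord_{w'}(J_k^{weak}) < d_k$. Taking the maximum over $w'$ preserves the strict inequality.

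The converse is the main difficulty, and I would proceed by contrapositive: assume the Hilbert-Samuel function is preserved at some $w'$, so $ord_{w'}(\tilde{f}_s) = d_s$ for every $s$, and show that every admissible generator $M^{(r)}\tilde{f}_r$ satisfies $ord_{w'}(M^{(r)}) \geq d_k - d_r$, whence $ord_{w'}(J_k^{weak}) = d_k$. The tool is precisely the support inequality $\sum_{j=1}^{e_s}\alpha_j^{(r)} \geq \max\{0,d_s-d_r\}$ defining $I_{r,k}$, combined with the fact that the variables $y_1,\dots,y_{e_s}$ supply hypersurfaces of maximal contact for $J_s$. I would show by descending induction on $s$ from $k-1$ down to $r+1$ that preservation of $ord_{w'}(\tilde{f}_s) = d_s$ forces $w'$ to lie in the locus where the chart coordinates associated with the first $e_s$ variables vanish, so that the lower bound on $\sum_{j=1}^{e_s}\alpha_j^{(r)}$ translates directly into a contribution to $ord_{w'}(M^{(r)})$. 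Telescoping these bounds together with $|\alpha^{(r)}| = d_k - d_r$ yields the required estimate, and this bookkeeping---matching the stacked constraints $d_s - d_r$ for each intermediate $s$ against the vanishing pattern of the chart coordinates forced by HS preservation---is the main technical obstacle and precisely the reason the numerical thresholds appear in the definition of $I_{r,k}$.
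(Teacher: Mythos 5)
Your forward direction and your computation of the weak transform of $J_k$ (generators $M^{(r)}\tilde f_r$ after dividing out the $d_k$-th power of the exceptional divisor, with $ord_{w'}(M^{(r)})\le d_k-d_r$) coincide with the paper's argument. The problem is the converse. You reduce it to the claim that preservation of the Hilbert--Samuel function at $w'$ forces $ord_{w'}(M^{(r)})\ge d_k-d_r$ for every admissible $\alpha^{(r)}$, and you propose to obtain this from a descending induction in which ``preservation of $ord_{w'}(\tilde f_s)=d_s$ forces $w'$ to lie in the locus where the chart coordinates of the first $e_s$ variables vanish.'' That step is precisely the hard point, and as stated it is not correct: $V(y_1),\dots,V(y_{e_s})$ are hypersurfaces of maximal contact for the ideal $J_s$, not for the single element $f_s$, so preservation of the order of $f_s$ alone does not force $w'$ onto their strict transforms. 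To invoke the maximal-contact containment for $J_s$ you must already know that the transform of $J_s$ still has order $d_s$ at $w'$ --- which is the same type of statement you are trying to prove at level $k$; the circularity can only be broken by an ascending induction starting from $J_1={\mathcal I}_{X,w}$ (where HS preservation does give preservation of the order), and even then the estimate needs that the \emph{entire} support of $\alpha^{(r)}$ lies among the maximal-contact variables, whose chart coordinates all vanish at $w'$: the partial-sum thresholds $\sum_{j\le e_s}\alpha^{(r)}_j\ge d_s-d_r$ that you propose to ``telescope'' bound only part of the weight and do not by themselves produce $d_k-d_r$. So the converse, as written, is a plan whose decisive step is missing (as you yourself flag), and the induction is set up in a direction that does not close.

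It is worth comparing with the paper, which handles the converse directly and much more cheaply: fixing $w_1$ and a generator $y^{\alpha}h$ of $J_k$ whose strict transform has order $<d_k$ there, either $ord_{w_1}(h_{strict})<ord_w(h)$, which already gives the drop of the Hilbert--Samuel function, or some $y_j$ in the support of $\alpha$ has strict transform of order zero at $w_1$; since such a $y_j$ is an order-one element of a derivative ideal and therefore divides a lowest-order term of some reduced standard basis element $g$, the order of $g_{strict}$ drops at $w_1$, and with it the Hilbert--Samuel function. This divisibility observation replaces the whole maximal-contact containment argument and the bookkeeping you identify as the obstacle; if you insist on your contrapositive route, you would in effect be re-proving that containment level by level through the $J_s$.
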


\begin{proof}
If the maximal value of the Hilbert-Samuel function of the strict transform
has decreased under the current blowing up, let us consider an arbitrary
point $w_1$ in the preimage of $w$ under the blowing up. There is at least 
one element, say $g$, of the reduced standard basis whose order has dropped, 
because the use of the reduced standard basis allows us to compute the strict 
transform by considering the strict transforms of the elements of the 
standard basis. But this implies that the order of the strict transform of
each generator $y^\alpha g$ of $J_k$ is strictly less than $d_k$ and hence 
the order of the weak transform of $J_k$ can no longer be $d_k$.\\
To prove the converse, let us now assume that the maximal order of the weak
transform of $J_k$ is no longer $d_k$ and let us fix an arbitrary point $w_1$ 
in the preimage of $w$ under the blowing up. There is at least one
generator of $J_k$, say $y^\alpha h$, whose strict transform
has order less than $d_k$ at $w_1$. A priori two situations may have occured:
$ord_{w_1}(h_{strict}) < ord_w(h)$, which directly implies a drop in the 
maximal value of the Hilbert-Samuel function under this blowing up, or
$ord_{w_1} ((y^\alpha)_{strict}) < |\alpha|$. In this second case, we 
can obviously find one $y_j$ whose strict transform has order zero. But
this $y_j$ was chosen as a local equation of a hypersurface of maximal 
contact for ${\mathcal I}_{X,w}$ and hence occurs as a factor of at least
one term of lowest order in at least one standard basis element, say $g$ of 
${\mathcal I}_{X,w}$. But this implies $ord_{w_1}(g_{strict}) < ord_{w}(g)$
which again corresponds to a decrease of the Hilbert-Samuel function 
as claimed.
\end{proof}

\begin{lem}
If the maximal value of the Hilbert-Samuel function is unchanged under 
blowing up,
i.e. if $HS_w({\mathcal I}_{X})=HS_{w_1}(({\mathcal I}_{X})_{strict})$, 
the weak transform of $J_k$ coincides with the newly constructed 
$\tilde{J_k}$ of the strict transform of ${\mathcal I}_{X,w}$.
\end{lem}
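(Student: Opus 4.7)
The plan is to combine three ingredients: the preservation of reduced standard basis structure under HS-invariant strict transform, an explicit chart calculation for the weak transform of $J_k$, and the observation that the numerical data $(k,d_i,e_i,I_{r,k})$ controlling $\tilde{J_k}$ are themselves HS-invariant.

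First I would show that the hypothesis $HS_w(\mathcal{I}_X)=HS_{w_1}((\mathcal{I}_X)_{strict})$ forces the reduced standard basis of $(\mathcal{I}_X)_{strict}$ at $w_1$ to have the same length $k$, the same orders $d_i$, and the same initial ideal (with respect to a suitable local ordering) as the original standard basis at $w$. Thus the strict transforms $(f_i)_{strict}$ serve, after interreduction, as the elements $\tilde{f_i}$ of the new standard basis. Since HS-preservation also preserves the $\Delta$-dimensions $\dim_K\Delta^{d_i-1}(\tilde{J_i})/\mathfrak{m}_{W,w_1}^2$, one obtains $\tilde{e_i}=e_i$ and, under the coordinate correspondence induced by the blow-up, $\tilde{I}_{r,k}=I_{r,k}$.

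Next I would carry out the chart calculation. Assuming HS does not drop at $w_1$, an extension of the argument used in the converse direction of the preceding lemma rules out any chart in which the selected variable lies in $\{y_1,\dots,y_{e_k}\}$: such a choice would render the strict transform of a hypersurface of maximal contact trivial and force HS to drop. In the chart actually containing $w_1$, writing $t$ for a local equation of $E$, every original parameter $y_j$ with $1\le j\le e_k$ lies in the defining ideal of the center and pulls back to $t\,y'_j$ for some new parameter $y'_j$. Combining this with $\pi^*(f_i)=t^{d_i}(f_i)_{strict}$ (HS-preservation) and $|\alpha^{(i)}|=d_k-d_i$, every generator of $J_k$ satisfies
$$
  \pi^*(y^{\alpha^{(i)}}f_i)\;=\;t^{d_k}\cdot(y')^{\alpha^{(i)}}\cdot(f_i)_{strict}.
$$
By the preceding lemma the maximal order of $(J_k)_{weak}$ remains equal to $d_k$, so one divides out exactly $I_E^{d_k}$, and $(J_k)_{weak}$ is generated by the elements $(y')^{\alpha^{(i)}}(f_i)_{strict}$ as $\alpha^{(i)}$ ranges over $I_{i,k}$. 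By the first step these coincide, term by term, with the generators of $\tilde{J_k}$, so the two ideals are equal.

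The main technical obstacle will be the first step: one must verify carefully that HS-preservation of $\mathcal{I}_X$ implies agreement of the initial ideals of the two reduced standard bases and hence of $k$, each $d_i$, and each $e_i$ at $w$ and at $w_1$. In particular, every hypersurface of maximal contact used at depth $i\le k$ in the original construction has to survive at $w_1$ in a form compatible with the blow-up coordinates, so that the defining inequalities of $I_{r,k}$ and $\tilde{I}_{r,k}$ match; degeneration of such hypersurfaces into the exceptional divisor is excluded by reusing the converse argument of the preceding lemma.
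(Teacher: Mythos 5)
Your proposal follows essentially the same route as the paper: use the preservation of the Hilbert--Samuel function to transfer the reduced standard basis structure to $w_1$ (so that the $d_i$, $e_i$ and the index sets are unchanged and the hypersurfaces of maximal contact may be taken to be the strict transforms of the old ones), and then compare the weak transform of $J_k$ with $\tilde{J_k}$ generator by generator. The only difference is one of detail: the paper compresses your chart computation $\pi^*(y^{\alpha^{(i)}}f_i)=t^{d_k}(y')^{\alpha^{(i)}}(f_i)_{strict}$ and the exclusion of the degenerate charts into the phrase ``by direct calculation,'' so your write-up is a faithful, more explicit version of the paper's argument.
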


\begin{proof}
As the construction of the $J_k$ uses a reduced standard basis as its
starting point und the strict transform may be computed on the level of
the strict transforms of the elements of a reduced standard basis, we
immediately see that the $d_i$ and the $e_i$ in the construction are unchanged
and the hypersurfaces of maximal contact may be chosen to be the strict
transforms of the previous ones. By direct calculation of the weak transform
of $J_k$, we can now check that the two ideals indeed coincide.
\end{proof}

Using the previous lemmata, we can now state a modified resolution 
invariant, which implicitly uses the Hilbert-Samuel function as the 
first building block, but only requires the explicit computation 
of its maximal value upon each drop of maximal order of an auxilliary 
ideal:
$$(ord(J_k),n_1;ord({\it Coeff}^{new}),n_2;ord({\it Coeff}^V),n_3;\dots)$$
in contrast to a Villamayor-style invariant
$$(ord({\mathcal I}_{X,w}),n_1;ord({\it Coeff}^V),n_2;\dots)$$
or a Bierstone-Milman-style invariant
$$(HS({\mathcal I}_{X,w}),n_1;ord({\it Coeff}^{BM}),n_2;\dots),$$
where in each case the $n_i$ denote counting of certain exceptional
divisors and ${ \it Coeff}$ should be seen as a symbolic notation for a
coefficient ideal of Villamayor and Bierstone-Milman respectively.\\
From a theoretical point of view, the only advantage of this new
resolution invariant lies in the fact that we may now treat the
use of the strict transform in the framework of order reduction.
From the practical point of view, on the other hand, this modified
invariant allows us to exploit order reduction of higher order generators
of the ideal speeding up the resolution process in a large class of
examples without trading the fewer blowing ups for the huge computations
necessary to determine the Hilbert-Samuel stratum in each step. 

\section{Computational Aspects} \label{compute}

For explicit calculations, it is most convenient to pass to an affine 
open covering of our smooth equidimensional scheme $W$ and consider 
${\mathcal I}_{X}(U) \subset {\mathcal O}_W(U)$ on each of the affine
open sets. For simplicity of presentation, we will assume from now
on that ${\mathcal O}_W(U)=K[y_1,\dots,y_n]$.\footnote{The same construction
can also be carried out in the case of ${\mathcal O}_W(U)=K[y_1,\dots,y_n]/J$
for some ideal $J$. In that case, however, it may be necessary to pass
to yet another open cover of $U$ such that the local system of parameters
for $W$ at each point of the fixed smaller open set can be induced by the
same set of elements. Even given such a set inducing the local systems of 
parameters on the whole open set, the subsequent computations still
become by far more technical, obstructing the view to the heart of the
considerations.} To define the modified coefficient ideal, which allows 
the use of the strict instead of the weak transform, our construction 
will proceed by iteration of two steps: We first need to determine  
(the next variety in) a suitable flag in $W$ and (the next parameter in) 
a regular system of paramters $x_1,\dots,x_n$ for $W$ subordinate 
to this flag. With respect to a local degree ordering on the set of 
monomials $Mon(x_1,\dots,x_n)$, we determine (an intermediate result up
to the current degree of) a reduced standard basis of 
$f_1,\dots,f_k \in {\mathcal I}_{X}(U) := I \subset K[x_1,\dots,x_n]$.
This standard basis provides the complete information about the 
Hilbert-Samuel function through combinatorial reasoning on the staircase;
the coefficients of the $f_i$ w.r.t. to an appropriate subset of
$\{x_1,\dots,x_i\} \subset \{x_1,\dots,x_n\}$ will subsequently be 
used to compute the modified coefficient ideal.\\

More precisely, the construction of the flag and the standard basis
can be stated as follows:\\

\noindent
\underline{Step0}: Initialization\\

Let $d_1$ be the order of the ideal $I$. Create a copy $I_{orig}$ of
$I$ for later use. \\

\noindent
\underline{Step1}: Find appropriate hypersurfaces\\ 

As the order of $I$ is $d_1$, the order of $\Delta^{d_1-1}(I)$
is one and any order-1-element thereof (subject to the appropriate normal
crossing conditions) provides a hypersurface of maximal
contact for $I$. If necessary cover the affine chart by finitely many open 
sets $V_i$ such that on each of these we may use the same hypersurface at all
points of the locus of maximal order $V(\Delta^{d_1-1}(I))$. This hypersurface
gives rise to our first coordinate $y_1$. Similarly\footnote{To this end, 
reduce $\Delta^{d_1-1}(I)$ w.r.t. the new $y_1$ and subsequently consider
its order again.}, we can proceed to determine $y_2,\dots,y_{e_1}$, if 
there are further generators of order 1 of $\Delta^{d_1-1}(I)$. \\
As we will use these new $y_i$ as the first elements of the set inducing
at each point of the Hilbert-Samuel stratum a local regular system of 
parameters, it is convenient to use them as new variables replacing
existent ones, be it directly, by passing to an open covering, by 
finite extension of our ground field or an appropriate combination of
these methods.\\

\noindent
\underline{Step2}: do SB in 'degree' $d_1$\\

Now we express (at least) the order $d_1$ terms of the generators of 
$I$ in terms of these new $y_i$. We then proceed through the standard
basis algorithm in this degree $d_1$ in the sense that we form spolys of
all pairs arising from generators in this degree and reduce them w.r.t.
these generators until they are either reduced or themselves of order 
at least $d_1+1$.\\

\noindent
\underline{Step3}: proceed to next degree\\
We note for all generators of $I_{orig}$ which appeared as degree $d_1$ 
generators of $I$ that the respective degree is $d_1$.
Then we replace all order $d_1$ generators of $I$ by all possible products
of one such generator with one of the already chosen $y_j$. Hence the new 
ideal $I$ arising in this way has order $d_1+1$, with which we can return 
to steps 1 and 2 with the following modifications: the previously chosen 
$y_j$ stay unchanged and the next $d_k$ is defined, if additional $y_j$ 
arise -- in addition to newly formed spolys also older ones are reduced, 
whereever necessary. Step 3 can then be applied using the new $d_j$ and 
we return to step 1 again unless we have just marked the last generator 
of $I_{orig}$. \\
If all generators of $I_{orig}$ are marked, then we have found all 
contributions to the modified coefficient ideal, because all further
standard basis element only provide coefficients which are combinations of
coefficients already provided by lower order generators of $I_{orig}$. This 
is sufficient for our purposes and we can hence stop at this point.
(Note that this is not the complete standard basis computation and 
hence we cannot detect the whole Hilbert-Samuel function from it, but only
the first entries up to degree $d_k$.) \\

\noindent
\underline{Step4}: form modified coefficient ideal 
                   (cf. section \ref{defcoef})\\
On each chart which arose in the construction, we have now determined $e_k$ 
new smooth hypersurfaces $V(y_1),\dots,V(y_{e_k})$ and add $n-e_k$ further 
ones such that it gives rise to a regular system of parameters at each point 
of $V(\langle y_1,\dots,y_{e_k}\rangle)$. With respect to these $y_j$ we
can then determine the modified coefficient ideal.\\
Note that for this coefficient ideal, we can then proceed as in the original
algorithm of Villamayor.

\section{Examples}

The first example is mostly intended to illustrate what the respective
ideals look like and how they are transformed.\\
To see that the modified approach can really contribute to an improvement
of the performance of the resolution algorithm for certain classes of ideals, 
we subsequently state a very simple explicit example for which we have an 
immediate improvement of the maximal value of the Hilbert-Samuel function 
of the strict transform, whereas the maximal order of the weak transform 
stays $2$.

\subsection{$V(z^2+x^3y^3,w^5+x^5+v^3y^2) \subset {\mathbb A}_{\mathbb C}^5$}

In this case the ideal to be considered is
$$I=\langle z^2+x^3y^3,w^5+x^5+v^3y^2 \rangle$$
which is already a standard basis w.r.t. a local degree reverse lexicographical
ordering on $Mon(x,y,z,w,v)$.\\
The first auxilliary ideal 
$\Delta(I)=\langle z,x^2y^3, x^3y^2, w^4, x^4, v^3y, v^2y^2 \rangle$ is
obviously of order $1$ and 
$dim_{\mathbb C} (\Delta(I)/\langle x,y,z,w,v \rangle^2) =1$. We thus choose
the first hypersurface of maximal contact to be defined by $y_1:=z$.
As this is already one of our coordinates, we do not need any coordinate
change at this point.\\
Following through the algorithmic steps of section 4, the next interesting
degree is 5, where our corresponding ideal $J$ has the structure:
$$J=\langle z^5+z^3x^3y^3, w^5+x^5+v^3y^2 \rangle,$$
which is of order $5$ and allows the hypersurfaces of maximal contact
$V(x)$, $V(y)$, $V(z)$, $V(w)$, $V(y)$ already implying that the
upcoming center should be the origin.\\
We now consider the situation after the blowing up at the origin in the
various charts which we label (for convenience of the reader) by the
generator of the exceptional divisor on this chart:
\begin{description}
\item[Chart 1:] $E=V(x)$\\
                $I_{strict}=\langle z^2+x^4y^3, w^5+1+v^3y^2 \rangle$
                which can easily be checked to allow at most order $1$ at
                all points of this chart. In particular, the order and 
                hence the Hilbert-Samuel function have decreased under
                this blowing up. The upcoming center will be determined
                inside the hypersurface $V(w^5+1+v^3y^2)$.
                (The {weak} transform of $J$ is
                $\langle z^5+z^3x^4y^3, w^5+1+v^3y^2 \rangle$ which
                shows that its order has also dropped as was to be
                expected.)\\
                For comparison, we now observe that
                $I_{weak}=\langle z^2+x^4y^3, x^3w^5+x^3+x^3v^3y^2 \rangle$
                where the maximal order is still $2$ and the order of 
                the second generator has not decreased as much as
                in the case of the strict transform. Leaving us with
                the first hypersurface of maximal contact being $V(z)$
                and a coefficient ideal of order $3$ arising in this step.
\item[Chart 2:] $E=V(y)$\\
                $I_{strict}=\langle z^2+x^3y^4, w^5+x^5+v^3 \rangle$
                which is still of order $2$ at the zero locus of the
                ideal $\Delta(I)=\langle z,x^2y^4,x^3y^3,w^4,x^4,v^2 \rangle$
                that is along the line $V(x,z,w,v)$. But the drop of 
                order of the second generator causes a decrease of the 
                maximal value of the Hilbert-Samuel, which is now
                $(1,5,14,29,\dots)$ along the line $V(x,z,v,w)$
                as compared to $(1,5,14,30,\dots)$ at the previous origin.
                The weak transform of $J$ is
                $\langle z^5+z^3x^3y^4, w^5+x^5+v^3\rangle$ which
                shows that its order has also dropped as was to be
                expected.\\
                For comparison, we observe here that
                $I_{weak}=\langle z^2+x^3y^4, y^3w^5+y^3x^5+v^3y^4\rangle$
                where the maximal order is still $2$, but the
                order of the second generator has even increased
                producing a coefficient ideal of order $7$ (before 
                splitting into monomial part $y^3$ and a non-monomial part
                of order $4$).
\item[Chart 3:] $E=V(z)$\\
                $I_{strict}=\langle 1+x^3y^3z^4, w^5+x^5+v^3y^2 \rangle$
                of order at most $1$ (analogous to the first chart).
                $I_{weak}=\langle 1+x^3y^3z^4, w^5z^2+x^5z^2+v^3y^2z^2\rangle$
                also of order $1$.
\item[Chart 4:] $E=V(w)$\\
                $I_{strict}=\langle z^2+x^3y^3w^4, 1+x^5+v^3y^2\rangle$
                of order at most $1$ (again analogous to the first chart).
                $I_{weak}=\langle z^2+x^3y^3w^4, w^3+x^5w^3+v^3y^2w^3 \rangle$
                of order $2$ giving rise to a coefficient ideal of order
                $3$.
\item[Chart 5]  $E=V(v)$\\
                $I_{strict}=\langle z^2+x^3y^3v^4, w^5+x^5+y^2 \rangle$
                of order $2$, with a drop in the maximal value of the
                Hilbert-Samuel function to $(1,5,13,25,\dots)$ (analogous
                to the second chart).
                $I_{weak}=\langle z^2+x^3y^3v^4, w^5v^3+x^5v^3+y^2v^3 \rangle$
                of order $2$ giving rise to a coefficient ideal of order
                $5$ (before splitting into a monomial part $v^3$ and 
                a non-monomial part of order $2$).
\end{description}
In this example, the order of the ideal was only two, to keep all
computations at a level of complexity which can still be followed
without difficulty. This also made sure that forming the usual coefficient
ideals of the weak transforms the highest powers of ideals which needed
to be computed were second powers.\\ 
If, however, the order of the original ideal is higher, the appearance 
of a factorial of the previous order in the exponents easily leads to 
far higher powers in the construction of the coefficient ideal which is
of course iterated several times in Villamayor's approach. In our
proposed approach we make sure that we descend the maximal possible 
number of hypersurfaces of maximal order at the very first descend
of ambient dimension, thus minimizing the effect of taking iterated 
factorials. 

\subsection{$V(x^5+y^{11},z^9+x^9) \subset {\mathbb A}^3$}

As monomial ordering we choose a negative degree reverse lexicographical 
ordering with $z>y>x$ implying that the given generators already form a
standard basis. Now the obvious choice of center is $V(x,y,z)$. 
Considering the strict and weak transforms as before, we now focus on 
the charts in which the exceptional divisors are $E=V(y)$ and $E=V(z)$
respectively, omitting the discussion of the third chart:
\begin{description}
\item[Chart 1:] $E=V(z)$\\
$I_{strict}=\langle x^5+y^{11}z^6,1+x^9 \rangle$ is already non-singular,
$I_{weak}=\langle x^5+y^{11}z^6,z^4+x^9z^4 \rangle$ leads to a coefficient
ideal of order $8$ (before splitting into monomial and non-monomial part)
requiring up to $8$-th powers of certain ideals in the computation.
\item[Chart 2:] $E=V(y)$\\
$I_{strict}=\langle x^5+y^6,z^9+x^9 \rangle$ leads by our algorithm to
a locus of maximal Hilbert-Samuel function $V(x,y,z)$ and will be resolved
after the subsequent blowing up;
$I_{weak}=\langle x^5+y^6,y^4z^9+y^4x^9 \rangle$ gives rise to a coefficient
ideal of order $10$ requiring up to 60-th powers of certain ideals in the 
computation.
\end{description}

\begin{rem}
As can be seen from the two previous examples, improvements of the 
choice of centers arise from the new variant of the algorithm, 
whenever the lowest order generator of the given ideal is harder to 
resolve than another one, which is of significantly higher order, 
but 'simpler' structure. Thus a heuristic choosing between the two
approaches could take into account:
\begin{itemize}
\item number of generators of the ideal
\item degrees of generators of the ideal
\item number of terms in lowest order generators.
\end{itemize}
In the presence of two CPUs, another way of joining the two approaches 
could be to start them parallely each on one CPU and interrupt the 
slower one as soon as the faster one returned a result. These lines
of thought, however, are beyond the scope of this short article and
have not been explored systematically up to now. 
\end{rem}


\begin{thebibliography}{3}
\bibitem{BEV} Bravo,A., Encinas,S., Villamayor,O.:{\it A Simplified
              Proof of Desingularisation and Applications},
              Rev. Math. Iberoamericana 21 (2005), 349--458.
\bibitem{BM} Bierstone,E., Milman,P.:{\it Canonical Desingularization
              in Characteristic Zero by Blowing up the Maximum Strata
              of a Local Invariant}, Invent.Math. {\bf 128} (1997), 
              pp. 207--302
\bibitem{BSch} Bodn\'ar,G., Schicho,J.:{\it A Computer Program for the
              Resolution of Singularities}, in Resolution of
              Singularities (eds. H.Hauser, J.Lipman, F. Oort,
              A. Quiros), Progr. in Math. {\bf 181} (2000), 
              pp. 231--238
\bibitem{EH}  Encinas,S., Hauser,H.: {\it Strong resolution of singularities
              in characteristic zero}, Comment. Math. Helv. 77 (2002),
              821--845.
\bibitem{FK1} Fr\"uhbis-Kr\"uger,A.: {\it Computational Aspects of 
              Singularities}, in J.-P. Brasselet, J.Damon et al.: 
              Singularities in Geometry and Topology, World Scientific 
              Publishing, 253--327 (2007)
\bibitem{FKP} Fr\"uhbis-Kr\"uger,A., Pfister,G.: {\it Some Applications 
              of Resolution of Singularities from a Practical Point of View},
              in Computational Commutative and Non-commutative Algebraic 
              Geometry (Chisinau 2004) NATO Science Series III, Computer 
              and Systems Sciences {\bf 196}, 104--117 (2005) 
\bibitem{Hir} Hironaka,H.:{\it Resolution of Singularities of an
              Algebraic Variety over a Field of Characteristic Zero}, 
              Annals of Math. {\bf 79} (1964), pp. 109--326
\bibitem{Kol} Koll\`ar,J.: {\it Lectures on Resolution of Singularities},
              Annals of Mathematics Studies {\bf 166}, Princeton University
              Press (2007)
\bibitem{Raschke} Raschke,A.: {\it Zur Berechnung des Hilbert-Samuel
              Stratums}, Diplomarbeit TU Kaiserslautern (2008)
\bibitem{Tuc} Tucker,K.: {\it Jumping numbers on Algebraic Surfaces with
              Rational Singularities}, arXiv:0801.0734 (2008)
\bibitem{W-Y} Watanabe,S.: {\it Resolution of Singularities and Weak 
              Convergence of Bayesian Stochastic Complexity}, 
              Proc. Int Workshop on Sing. Models and Geom. Methods in Stat.,
              Inst. of Stat. Math.,  pp.156-165

\end{thebibliography}
\end{document}